%
%
%

\documentclass{article}


\usepackage{mathptmx}       
\usepackage{helvet}         
\usepackage{courier}        
\usepackage{type1cm}        
%
\usepackage{makeidx}         
\usepackage{graphicx}        
\usepackage{multicol}        
\usepackage[bottom]{footmisc}
\usepackage{amsmath,amsthm,amssymb,amsfonts}

\newtheorem{theorem}{Theorem}[subsection]
\newtheorem{lemma}[theorem]{Lemma}
\newtheorem{remark}{Remark}
\newtheorem{corollary}[theorem]{Corollary}

\makeindex             
\begin{document}

\title{Kinetic ES-BGK models for a multi-component gas mixture}
\author{Christian Klingenberg, Marlies Pirner and Gabriella Puppo}
%
%
\date{}
\maketitle


\abstract{We consider a multi component mixture of inert gas in the kinetic regime by assuming that the total number of particles of each species remains constant. In this article we shall illustrate our model for the case of two species. To account for thermal effects, we extend a BGK model based on the presence of a collision term for each possible interaction \cite{Pirner} by including ES-BGK effects. We prove consistency of the extended model like conservation properties, positivity of all temperatures, H-theorem and convergence to a global equilibrium in the shape of a global Maxwell distribution. 
}

\section*{Introduction}
 In this paper we shall concern ourselves with a kinetic description of gases. This is traditionally done via the Boltzmann equation for the density distributions $f_1$ and $f_2$. Under certain assumptions the complicated interaction terms of the Boltzmann equation can be simplified by a so called BGK approximation, consisting of a collision frequency multiplied by the deviation of the distributions from local Maxwellians. This approximation should be constructed in a way such that it  has the same main properties of the Boltzmann equation namely conservation of mass, momentum and energy, further it should have an H-theorem with its entropy inequality and the equilibrium must still be Maxwellian.  BGK  models give rise to efficient numerical computations, which are asymptotic preserving, that is they remain efficient even approaching the hydrodynamic regime \cite{Puppo_2007, Jin_2010,Dimarco_2014, Bennoune_2008,  Bernard_2015, Crestetto_2012}. However, the drawback of the BGK approximation  is its incapability of reproducing the correct Boltzmann hydrodynamic regime in the asymptotic continuum limit. Therefore, a modified version called ES-BGK approximation was suggested  by Holway in the case of one species \cite{Holway}. The H-Theorem of this model then was proven in \cite{Perthame} and existence and uniqueness of solutions in \cite{Yun}.
 
Here we shall focus on gas mixtures modelled via an ES-BGK approach. In the literature there is a BGK model for gas mixtures  suggested by Andries, Aoki and Perthame in \cite{AndriesAokiPerthame2002} which contains only one collision term on the right-hand side. Extensions of this model to an ES-BGK model for gas mixtures  are given by Groppi in \cite{ Groppi} or the model by Brull \cite{Brull} with an extension leading to a correct Prandtl number in the Navier Stokes equation, adapting the ES-BGK model for mixtures. 

In this paper we are interested in an extension to an ES-BGK model of a BGK model for gas mixtures \cite{Pirner} which just like the Boltzmann equation for gas mixtures contains a sum of collision terms on the right-hand side. Other examples of ES-BGK models for gas mixtures are the models of Gross and Krook \cite{gross_krook1956}, Hamel \cite{hamel1965},  Asinari \cite{asinari}. The advantage of this extended model is that we have free parameters to possibly being able to determine macroscopic physical constants like viscosity or heat conductivity when taking the limit to the Navier-Stokes equations.

The outline of the paper is as follows: in section 1.1 we will present the BGK model for two species  developed in \cite{Pirner}.  In section 1.2, we suggest extensions to an ES-BGK model for mixtures and prove the corresponding H-Theorem.

\section{The BGK approximation}
\label{sec:1}
In this section we will present the BGK model for a mixture of two species and mention its fundamental properties like the conservation properties and the H-theorem.

For simplicity in the following we consider a mixture composed of two different species, but the discussion can be generalized to multi species mixtures. Thus, our kinetic model has two distribution functions $f_1(x,v,t)> 0$ and $f_2(x,v,t) > 0$ where $x\in \Lambda \subset \mathbb{R}^3$ and $v\in \mathbb{R}^3$ are the phase space variables and $t\geq 0$ the time.  The distribution functions are determined by two equations to describe their time evolution. Furthermore we only consider binary interactions. So the particles of one species can interact with either themselves or with particles of the other species. In the model this is accounted for introducing two interaction terms in both equations. These considerations allow us to write formally the system of equations for the evolution of the mixture. The following structure containing a sum of the collision operator is also given in \cite{Cercignani, Cercignani_1975}.

 Furthermore, for any $f_1,f_2: \Lambda \subset \mathbb{R}^3 \times \mathbb{R}^3 \times \mathbb{R}^+_0 \rightarrow \mathbb{R}$ with $(1+|v|^2)f_1,(1+|v|^2)f_2 \in L^1(\mathbb{R}^3), f_1,f_2 \geq 0$ we relate the distribution functions to  macroscopic quantities by mean-values of $f_k$, $k=1,2$
\begin{align}
\int f_k(v) \begin{pmatrix}
1 \\ v  \\ m_k |v-u_k|^2 \\ m_k (v-u_k(x,t)) \otimes (v-u_k(x,t)) \end{pmatrix} 
dv =: \begin{pmatrix}
n_k \\ n_k u_k \\  3 n_k T_k \\ \mathbb{P}_k
\end{pmatrix} , \quad k=1,2,
\label{moments}
\end{align} 
where $n_k$ is the number density, $u_k$ the mean velocity and $T_k$ the mean temperature of species $k$, $k=1,2$. Note that in this paper we shall write $T_k$ instead of $k_B T_k$, where $k_B$ is Boltzmann's constant.

We are interested in a BGK approximation of the interaction terms. This leads us to define equilibrium distributions not only for each species itself but also for the two interspecies equilibrium distributions. We choose the collision terms   as BGK operators and denote them for future references by $Q_{11}, Q_{12}, Q_{21}$ and $Q_{22}$. Then the model can be written as:

\begin{align} \begin{split} \label{BGK}
\partial_t f_1 + \nabla_x \cdot (v f_1)   &= \nu_{11} n_1 (M_1 - f_1) + \nu_{12} n_2 (M_{12}- f_1),
\\ 
\partial_t f_2 + \nabla_x \cdot (v f_2) &=\nu_{22} n_2 (M_2 - f_2) + \nu_{21} n_1 (M_{21}- f_2), 
\end{split}
\end{align}
with the Maxwell distributions
\begin{align} 
\begin{split}
M_k(x,v,t) = \frac{n_k}{\sqrt{2 \pi \frac{T_k}{m_k}}^3 }  \exp({- \frac{|v-u_k|^2}{2 \frac{T_k}{m_k}}}),
\quad k=1,2,
\\
M_{kj}(x,v,t) = \frac{n_{kj}}{\sqrt{2 \pi \frac{T_{kj}}{m_k}}^3 }  \exp({- \frac{|v-u_{kj}|^2}{2 \frac{T_{kj}}{m_k}}}), \quad k,j=1,2,~ k \neq j,
\end{split}
\label{BGKmix}
\end{align}
where $\nu_{11} n_1$ and $\nu_{22} n_2$ are the collision frequencies of the particles of each species with itself, while $\nu_{12}$ and $\nu_{21}$ are related to interspecies collisions.
To be flexible in choosing the relationship between the collision frequencies, we now assume the relationship
\begin{align} 
\nu_{12}&=\varepsilon \nu_{21}, \hspace{3.1cm} 0 < \varepsilon \leq 1,
 \\
\nu_{11} &= \beta_1 \nu_{12}, \quad \nu_{22} = \beta_2 \nu_{21}, \quad \quad \beta_1, \beta_2 >0.
\label{coll}
\end{align}
The restriction $\varepsilon \leq 1$ is without loss of generality. If $\varepsilon >1$, exchange the notation $1$ and $2$ and choose $\frac{1}{\varepsilon}.$ In addition, we assume that all collision frequencies are positive.

The structure of the collision terms ensures that if one collision frequency $\nu_{kl} \rightarrow \infty$, the corresponding distribution function becomes a Maxwell distribution. In addition at global equilibrium, the distribution functions become Maxwell distributions with the same velocity and temperature (see section 2.8 in \cite{Pirner}).
The Maxwell distributions $M_1$ and $M_2$ in \eqref{BGKmix} have the same moments as $f_1$ and $f_2$, respectively. With this choice, we guarantee the conservation of mass, momentum and energy in interactions of one species with itself (see section 2.2 in \cite{Pirner}).
The remaining parameters $n_{12}, n_{21}, u_{12}, u_{21}, T_{12}$ and $T_{21}$ will be determined using conservation of total momentum and energy, together with some symmetry considerations.

%
%
%
%
%
%
If we assume that \begin{align} n_{12}=n_1 \quad \text{and} \quad n_{21}=n_2,  
\label{density} 
\end{align}
 \begin{align}
u_{12}= \delta u_1 + (1- \delta) u_2, \quad \delta \in \mathbb{R},
\label{convexvel}
\end{align} 
and
\begin{align}
\begin{split}
T_{12} &=  \alpha T_1 + ( 1 - \alpha) T_2 + \gamma |u_1 - u_2 | ^2,  \quad 0 \leq \alpha \leq 1, \gamma \geq 0 ,
\label{contemp}
\end{split}
\end{align}
we have conservation of the number of particles, of total momentum and total energy provided that
\begin{align}
u_{21}=u_2 - \frac{m_1}{m_2} \varepsilon (1- \delta ) (u_2 - u_1),
\label{veloc}
\end{align}
and
\begin{align}
\begin{split}
T_{21} =\left[ \frac{1}{3} \varepsilon m_1 (1- \delta) \left( \frac{m_1}{m_2} \varepsilon ( \delta - 1) + \delta +1 \right) - \varepsilon \gamma \right] |u_1 - u_2|^2 \\+ \varepsilon ( 1 - \alpha ) T_1 + ( 1- \varepsilon ( 1 - \alpha)) T_2,
\label{temp}
\end{split}
\end{align}
 see Theorem 2.1, Theorem 2.2 and Theorem 2.3 in \cite{Pirner}.

 We see that without using an ES-BGK extension, we already have three free parameters in \eqref{convexvel} and \eqref{contemp} in order to match coefficients like the Fick's constant or the heat conductivity in the Navier-Stokes equations.  But when we derive the Navier-Stokes equations by a Chapman-Enskog expansion $f_k = f_k^0 + \tilde{\epsilon} f_k^1 + \tilde{\epsilon}^2 f_k^2 + \cdots$, one can show that $|u_1 - u_2|^2$ is of order $\tilde{\epsilon}^2$, so $\gamma$ from \eqref{contemp} does not appear in the first order Navier-Stokes equations and therefore cannot be used to match parameters there. \\
In order to ensure the positivity of all temperatures, we need to impose restrictions on $\delta$ and $\gamma$, 
 \begin{align}
0 \leq \gamma  \leq \frac{m_1}{3} (1-\delta) \left[(1 + \frac{m_1}{m_2} \varepsilon ) \delta + 1 - \frac{m_1}{m_2} \varepsilon \right],
 \label{gamma}
 \end{align}
and
\begin{align}
 \frac{ \frac{m_1}{m_2}\varepsilon - 1}{1+\frac{m_1}{m_2}\varepsilon} \leq  \delta \leq 1,
\label{gammapos}
\end{align}
see Theorem 2.5 in \cite{Pirner}.\\ \\ This summarizes our kinetic model \eqref{BGK} in of two species that contains three free parameters. More details can be found in \cite{Pirner}.

\section{Extensions to an ES-BGK approximation}
\label{sec:2}
\subsection{Extension of the single relaxation terms}
\label{sec:2.1}

Motivated by the need to find a two species kinetic model that allows us to model physical
parameters better we extend the above model by generalizing the Maxwellians. The simplest choice is to only  replace the collision operators which represent the collisions of a species with itself by the ES-BGK collision operator for one species suggested in \cite{AndriesPerthame2001}. Then the model can be written as:

\begin{align} \begin{split} \label{ESBGKsimple}
\partial_t f_k + \nabla_x \cdot (v f_k)   &= \nu_{kk} n_k (G_k - f_k ) + \nu_{kj} n_j ( M_{kj} - f_k), \quad k,j=1,2, ~ j \neq k,
\end{split}
\end{align}
with the modified Maxwell distributions
\begin{align} 
\begin{split}
G_k(x,v,t)&= \frac{n_k}{\sqrt{det(2 \pi \frac{\mathcal{T}_k}{m_k})}} \exp(- \frac{1}{2} (v-u_k) \cdot (\frac{\mathcal{T}_k}{m_k})^{-1} \cdot (v-u_k)), \quad k=1,2,
\end{split}
\label{ESBGKmixsimple}
\end{align}
and $M_{12}, M_{21}$ the Maxwellians described in the previous section.
 $G_1$ and $G_2$ have the same densities, velocities and pressure tensors as $f_1$ respective $f_2$, so we still guarantee the conservation of mass, momentum and energy in interactions of one species with itself. 
Since the first term describes the interactions of a species with itself, it should correspond to the single ES-BGK collision operator suggested in \cite{AndriesPerthame2001}. So we choose $\mathcal{T}_1$ and $\mathcal{T}_2$ as 
\begin{align}
\mathcal{T}_k= (1- \mu_k) T_k \textbf{1} + \mu_k \frac{\mathbb{P}_k}{n_k}, 
\label{ten}
\end{align}
 with $\mu_k \in \mathbb{R}$, $k=1,2$ being free parameters which we can choose in a way to fix physical parameters in the Navier-Stokes equations. So, all in all, together with the parameters in the mixture Maxwellians \eqref{convexvel} and \eqref{contemp} we now have five free parameters. 

Since we wrote $\mathcal{T}_k^{-1}$ we have to check if $\mathcal{T}_k$ is invertible. Otherwise the model is not well-posed. For the one species tensor this is done by the following Theorem proven in \cite{AndriesPerthame2001}.
\begin{theorem}
Assume that $f_k>0$. Then $\frac{{\mathbb{P}_k}}{n_k }$  has strictly positive eigenvalues. If we further assume that $ - \frac{1}{2} \leq \mu_k \leq 1,$ then $\mathcal{T}_k$ has strictly positive eigenvalues and therefore $\mathcal{T}_k$ is invertible.
\end{theorem}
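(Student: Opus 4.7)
The plan is to work on the two claims in order, using the definition \eqref{moments} of $\mathbb{P}_k$ together with elementary linear algebra on the simultaneously diagonalizable matrices in \eqref{ten}.

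First I would establish positivity of $\mathbb{P}_k/n_k$. Since $\mathbb{P}_k = \int m_k (v-u_k)\otimes(v-u_k) f_k(v)\,dv$ is a symmetric matrix, for any $y \in \mathbb{R}^3 \setminus \{0\}$ one has
\begin{equation*}
y^{T} \mathbb{P}_k\, y = \int m_k\, (y \cdot (v-u_k))^2 f_k(v)\, dv.
\end{equation*}
The integrand is nonnegative, and strictly positive on the open half-space $\{v : y\cdot(v-u_k) \neq 0\}$, which has positive Lebesgue measure. Since $f_k > 0$ almost everywhere, the integral is strictly positive, so $\mathbb{P}_k$ is positive definite. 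Dividing by $n_k > 0$ preserves this, giving three strictly positive eigenvalues $\lambda_1,\lambda_2,\lambda_3$. Taking the trace of the defining equation yields the identity $\lambda_1+\lambda_2+\lambda_3 = 3T_k$.

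For the second claim, note that $\mathcal{T}_k = (1-\mu_k)T_k\mathbf{1}+\mu_k\mathbb{P}_k/n_k$ is simultaneously diagonalizable with $\mathbb{P}_k/n_k$, so its eigenvalues are $\eta_i := (1-\mu_k)T_k + \mu_k\lambda_i$. I would split into two sub-cases according to the sign of $\mu_k$. For $0 \le \mu_k \le 1$, both contributions are nonnegative and at least one is strictly positive (using $T_k>0$ and $\lambda_i>0$), giving $\eta_i>0$. For $-\tfrac{1}{2} \le \mu_k < 0$, rewrite
\begin{equation*}
\eta_i = T_k + \mu_k(\lambda_i - T_k).
\end{equation*}
The key observation from the trace identity and strict positivity of all $\lambda_j$ is that each individual eigenvalue satisfies $\lambda_i < 3T_k$ strictly, hence $\lambda_i - T_k < 2T_k$. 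Therefore $\mu_k(\lambda_i-T_k) > -|\mu_k|\cdot 2T_k \ge -T_k$, which gives $\eta_i > 0$.

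The only slightly delicate point is the strict inequality at the boundary $\mu_k = -\tfrac{1}{2}$, and the key to it is exactly that $\lambda_j > 0$ for \emph{every} $j$, so no single eigenvalue can reach $3T_k$. Once the eigenvalues of $\mathcal{T}_k$ are shown to be strictly positive, invertibility of $\mathcal{T}_k$ follows immediately since $\det \mathcal{T}_k = \eta_1\eta_2\eta_3 > 0$.
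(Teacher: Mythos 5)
Your argument is correct, and it is essentially the standard proof from Andries--Perthame, which the paper itself does not reproduce but simply cites: positive definiteness of $\mathbb{P}_k$ via the quadratic form $y^{T}\mathbb{P}_k y=\int m_k (y\cdot(v-u_k))^2 f_k\,dv>0$, followed by the observation that the eigenvalues of $\mathcal{T}_k$ are $(1-\mu_k)T_k+\mu_k\lambda_i$ and that the trace identity $\lambda_1+\lambda_2+\lambda_3=3T_k$ forces $\lambda_i<3T_k$, which handles the negative range $-\tfrac12\le\mu_k<0$. The only cosmetic slip is calling $\{v: y\cdot(v-u_k)\neq 0\}$ a half-space (it is the complement of a hyperplane), which does not affect the argument.
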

\subsubsection{Equilibrium and entropy inequality}
\label{subsubsec:2}
In global equilibrium when $f_1$ and $f_2$ are independent of $x$ and $t$, the right- hand side of \eqref{ESBGKsimple} has to be zero. In this case we get 
$$ f_1 = \frac{1}{\nu_{11} n_1 + \nu_{12} n_2}( \nu_{11} n_1 G_1 + \nu_{12} n_2 M_{12}).$$
If we compute the velocities of this expression, we can deduce $u_1=u_2$ for $\delta \neq 1$.
If we compute the temperatures of this expression using $u_1=u_2$, we get
\begin{align*}
 T_1 = \frac{1}{\nu_{11} n_1 + \nu_{12} n_2} ( \nu_{11} n_1 T_1 + \nu_{12} n_2 ( \alpha T_1 + ( 1- \alpha) T_2)), \\
\end{align*}
which is  equivalent to $T_1=T_2$ for $\alpha \neq 1$. So let $T:= T_1= T_2$ and use $u_1=u_2$.
If we compute pressure tensors, we get
\begin{align*}
(\nu_{11} n_1 + \nu_{12} n_2) \mathbb{P}_1 &= \nu_{11} n_1  \mathcal{T}_1 + \nu_{12} n_2 T_{12}\\ &= \nu_{11} n_1 ( 1- \mu_1) T \textbf{1} + \nu_{11} n_1 \mu_1 \mathbb{P}_1 + \nu_{12} n_2 T \textbf{1},
\end{align*}
which is equivalent to 
\begin{align*}
(\nu_{11} n_1 + \nu_{12} n_2 -\nu_{11} n_1 \mu_1 ) \mathbb{P}_1 = (\nu_{11} n_1 + \nu_{12} n_2 -\nu_{11} n_1 \mu_1) T \textbf{1},
\end{align*}
which is $\mathbb{P}_1 = T \textbf{1}$ for $\delta, \alpha \neq 1$, $\mu_1 \leq 1$.
This means that the pressure tensor of $f_1$ and $f_2$ is diagonal and $f_1,f_2$ are Maxwellian distributions with equal mean velocity and temperature. $\delta=1$ or $\alpha =1$ are cases in which the mixture Maxwellians do not contain the velocity or the temperature of the other species, see \eqref{convexvel} and \eqref{contemp}. In this case the two gases do not exchange information and a global equilibrium cannot be reached.                                                                                                                                                                                                                                                                                                                                                                                                                                                                                                                                                                                                                                                                                                                                                                                                                                                                                                                                                                                                                                                                                                                                                                                                                                                                                                                                                                                                                                                                                                                                                                                                                                                                                                                                                                                                                                                                                                                                                                                                                                                                                                                                                                                                                                                                                                                                                                                                                                                                                                                                                                                                                                                                                                                                                                                                                                                                                                                                                                                                                                                                                                                                                                                                                                                         
\begin{theorem}[H-theorem for the mixture]
Assume that $f_1, f_2 >0$ are solutions to \eqref{BGK}.
Assume the relationship between the collision frequencies \eqref{coll} , the conditions for the interspecies Maxwellians  \eqref{convexvel}, \eqref{veloc}, \eqref{contemp} and \eqref{temp} and the positivity of the temperatures \eqref{gamma}, then
{\small
$$
\int (\ln f_1) ~ Q_{11}(f_1,f_1) + (\ln f_1) ~ Q_{12}(f_1,f_2) dv \\+ \int (\ln f_2) ~ Q_{22}(f_2, f_2)+ (\ln f_2) ~ Q_{21}(f_2, f_1) dv\leq 0,
$$}
 with equality if and only if $f_1$ and $f_2$ are Maxwell distributions with equal velocity and temperature. 
\label{H-theoremsimple}
\end{theorem}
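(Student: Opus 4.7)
The plan is to split the entropy production into four pieces matching the collision operators $Q_{11}, Q_{12}, Q_{21}, Q_{22}$, and to treat the self-interaction and cross-interaction contributions by different mechanisms. The left-hand side becomes
\begin{equation*}
\nu_{11} n_1 \int \ln f_1 (G_1 - f_1) dv + \nu_{22} n_2 \int \ln f_2 (G_2 - f_2) dv + \nu_{12} n_2 \int \ln f_1 (M_{12} - f_1) dv + \nu_{21} n_1 \int \ln f_2 (M_{21} - f_2) dv.
\end{equation*}
The first two self-interaction terms are non-positive by the single-species ES-BGK H-theorem of Andries and Perthame \cite{AndriesPerthame2001}, valid under $-\tfrac{1}{2} \le \mu_k \le 1$, with equality iff $f_k$ is Maxwellian with $\mathbb{P}_k = n_k T_k \mathbf{1}$.

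For the cross-interaction terms I would invoke the elementary convexity inequality $(\ln a - \ln b)(a - b) \ge 0$ for $a,b>0$, which yields
\begin{equation*}
\int \ln f_k (M_{kj} - f_k) dv \;\le\; \int \ln M_{kj} (M_{kj} - f_k) dv.
\end{equation*}
Because $\ln M_{kj}$ is quadratic in $v$, the right-hand side is an explicit combination of the moments of $f_k$ listed in \eqref{moments} and of the interspecies parameters $n_{kj}, u_{kj}, T_{kj}$. Summing the two weighted cross contributions and substituting the conservation-enforcing formulas \eqref{density}, \eqref{convexvel}, \eqref{veloc}, \eqref{contemp}, \eqref{temp} together with the scaling \eqref{coll}, the terms linear in the macroscopic moments cancel by construction, leaving a residual quadratic form in $u_1 - u_2$ and in $T_1 - T_2$.

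The main obstacle is showing that this residual quadratic form is non-positive. This is the algebraic step already carried out for the pure BGK mixture in \cite{Pirner}; it transfers directly to the present setting because $G_k$ and $M_k$ share the density, momentum, and trace of pressure tensor of $f_k$, while the trace-free part of $\mathbb{P}_k$ (which distinguishes $G_k$ from $M_k$) does not appear in the cross integrals, where only $M_{kj}$ and $f_k$ enter. The required non-positivity is a consequence of the constraints \eqref{gamma}--\eqref{gammapos} on $\delta$ and $\gamma$, which ensure $T_{12}, T_{21} > 0$ and fix the correct sign for the coefficient of $|u_1 - u_2|^2$; the contribution in $(T_1 - T_2)^2$ is automatically non-positive since it factors through $-(T_1-T_2)^2/(T_{12}T_{21})$.

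For the equality case, vanishing of the self-interaction terms forces $f_k = G_k$ and hence, by the single-species equality statement, $f_k$ is Maxwellian with $\mathbb{P}_k = n_k T_k \mathbf{1}$. Vanishing of the cross-interaction bound then forces the residual quadratic form to be zero, which is possible only if $u_1 = u_2$ and $T_1 = T_2$, matching the assertion in the theorem.
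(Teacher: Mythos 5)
Your decomposition into the four collision contributions and your treatment of the self-interaction terms via the single-species ES-BGK H-theorem of \cite{AndriesPerthame2001} coincide with what the paper does. The gap is in the cross-interaction terms. The pointwise monotonicity inequality $(\ln a-\ln b)(a-b)\ge 0$ gives
$$\int \ln f_k\,(M_{kj}-f_k)\,dv\;\le\;\int \ln M_{kj}\,(M_{kj}-f_k)\,dv\;=\;\frac{3n_k}{2}\Bigl(\frac{T_k}{T_{kj}}-1\Bigr)+\frac{m_k n_k|u_k-u_{kj}|^2}{2T_{kj}},$$
using $n_{kj}=n_k$, and this bound is too weak to conclude. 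Take $u_1=u_2$, so the velocity terms and the $\gamma$ contribution vanish; then $T_1-T_{12}=(1-\alpha)(T_1-T_2)$, $T_2-T_{21}=\varepsilon(1-\alpha)(T_2-T_1)$, and with $\nu_{12}=\varepsilon\nu_{21}$ the weighted sum of the two bounds equals
$$\frac{3n_1n_2\nu_{12}}{2}\,(1-\alpha)\bigl(\varepsilon(1-\alpha)-\alpha\bigr)\,\frac{(T_1-T_2)^2}{T_{12}T_{21}},$$
whose coefficient is strictly positive whenever $\alpha<\varepsilon/(1+\varepsilon)$ (e.g.\ $\alpha=0$, $\varepsilon=1$). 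So the residual quadratic form does not ``factor through $-(T_1-T_2)^2/(T_{12}T_{21})$'' as you assert, and your estimate cannot establish non-positivity of the cross terms; the equality discussion built on it inherits the same problem.

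The paper handles the cross terms by citing Theorem 2.7 in \cite{Pirner}, whose mechanism is the one reproduced later in the proof of Theorem \ref{H-theorem}: one uses the strictly sharper convexity inequality for $H(x)=x\ln x-x$, namely $\ln f\,(g-f)\le g\ln g-f\ln f+f-g$ (your bound exceeds this one by $f\ln(f/M_{kj})-(f-M_{kj})\ge 0$, which is exactly the slack you are missing). This reduces the cross terms to $\nu_{12}n_2\bigl(\int M_{12}\ln M_{12}\,dv-\int f_1\ln f_1\,dv\bigr)+\nu_{21}n_1\bigl(\int M_{21}\ln M_{21}\,dv-\int f_2\ln f_2\,dv\bigr)$, which is then controlled by the Gibbs inequality $\int f_k\ln f_k\,dv\ge\int M_k\ln M_k\,dv$ together with the scalar estimate $T_{12}^{\varepsilon}T_{21}\ge T_1^{\varepsilon}T_2$, the analogue of Lemma \ref{inequ}, obtained from concavity of the logarithm and the positivity constraints \eqref{gamma}--\eqref{gammapos}. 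Replacing your monotonicity step by this convexity--Gibbs argument (or simply invoking Theorem 2.7 of \cite{Pirner}, as the paper does) closes the gap; your observation that the trace-free part of $\mathbb{P}_k$ does not enter the cross integrals is correct and is indeed why the mixture result transfers unchanged.
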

\begin{proof}
The fact that $\int \ln f_k Q_{kk}(f_k,f_k) dv \leq 0, \quad k=1,2$ with a criteria for equality follows from the H-Theorem of the ES-BGK model for one species, see \cite{AndriesPerthame2001}. The fact that $\int \ln f_1 Q_{12} (f_1,f_2) dv + \int \ln f_2 Q_{21} ( f_1,f_2) dv \leq 0$ with a corresponding criteria for equality follows from the H-Theorem of the BGK model for mixtures, see Theorem 2.7 in \cite{Pirner}.
\end{proof}

\subsection{Alternative extensions to an ES-BGK model}
In this subsection we also want to replace the scalar temperatures in the mixture Maxwellians by a tensor.
In the first model the terms $(v_j - u_{kj})f_k (v_i - u_{ki})$ for $i\neq j$ do not appear in the relaxation operator. To obtain a more detailed description of the viscous effects in the mixture we take into account these cross terms during the relaxation process. Then the model can be written as:

\begin{align} \begin{split} \label{ESBGK}
\partial_t f_k + \nabla_x \cdot (v f_k)   &= \nu_{kk} n_k (G_k - f_k ) + \nu_{kj} n_j ( G_{kj} - f_k), \quad k=1,2, k \neq j, 
\end{split}
\end{align}
with the modified Maxwell distributions
\begin{align} 
\begin{split}
G_k(x,v,t)= \frac{n_k}{\sqrt{\det(2 \pi \frac{\mathcal{T}_k}{m_k})}} \exp(- \frac{1}{2} (v-u_k) \cdot (\frac{\mathcal{T}_k}{m_k})^{-1} \cdot (v-u_k)) \quad k=1,2,
\\
G_{kj}(x,v,t) = \frac{n_k}{\sqrt{\det(2 \pi \frac{\mathcal{T}_{kj}}{m_k})}} \exp(- \frac{1}{2} (v-u_{kj}) \cdot (\frac{\mathcal{T}_{kj}}{m_k})^{-1} \cdot (v-u_{kj})) \quad k=1,2, k \neq j.
\end{split}
\label{ESBGKmix}
\end{align}
Again, the conservation of mass, momentum and energy in interactions of one species with itself is ensured by this choice of the modified Maxwell distributions $G_1$ and $G_2$ which have the same densities, velocities and pressure tensor as $f_1$ and $f_2$, respectively. In addition, the choice of the densities in $G_{12}$ and $G_{21}$, we also guarantee conservation of mass in interactions of one species with the other one.
If we extend $T_{12}$ and $T_{21}$ in the same fashion to a tensor as in the case of one species, we obtain
{\small
\begin{align}
\mathcal{T}_{12} &=(1- \mu_{12}) (\alpha T_1 + (1- \alpha) T_2 ) \textbf{1} + \mu_{12} \frac{\alpha \mathbb{P}_1 + (1- \alpha) \mathbb{P}_2 }{n_1}+ \gamma |u_1 - u_2|^2  \textbf{1}, \label{tau12a}
 \\
 \begin{split}
\mathcal{T}_{21} &= (1- \mu_{21} ) ((1- \varepsilon (1- \alpha)) T_2 + \varepsilon (1- \alpha) T_1)  \textbf{1} \\&+ \mu_{21} \frac{(1- \varepsilon (1- \alpha)) \mathbb{P}_2 + \varepsilon (1- \alpha) \mathbb{P}_1 }{n_2}+ (\frac{1}{3} \varepsilon m_1 (1- \delta)( \frac{m_1}{m_2} \varepsilon ( \delta - 1) + \delta + 1) - \varepsilon \gamma)|u_1 - u_2|^2 \textbf{1}. \label{tau21a}
\end{split}
\end{align}}
If we check the equilibrium distributions as in section 1.2.1.1, we obtain the following restrictions on $\mu_{12}$ and $\mu_{21}$ given by
\begin{align}
\mu_{12} = 1+ (1- \mu_1) \frac{n_1}{n_2} \frac{ \nu_{11}}{\nu_{12}},
\label{res12}
\end{align}
and
\begin{align}
\begin{split}
 \frac{1}{n_1^2} [ -(\alpha -1)^2 \mu_{12}^2 n_2^2 \nu_{12}^2 + \frac{n_1}{n_2^2}(( \frac{\mu_{21}}{\varepsilon} - \mu_{21} + \alpha \mu_{21}) n_1 \nu_{12} + (\mu_2 -1) n_2 \nu_{22} )\\ \cdot ( n_1 ((\alpha -1) \mu_{21} n_1 + \frac{1}{\varepsilon} ( \mu_{21} -1) n_2 ) \nu_{12} + (\mu_2 -1) n_2^2 \nu_{22} )]=0,
 \end{split}
 \label{res21}
\end{align}
An alternative choice to \eqref{tau12a},\eqref{tau21a}, which is less complicated, is given by
\begin{align}
\mathcal{T}_{12} &= \alpha \frac{\mathbb{P}_1}{n_1} + ( 1 - \alpha) T_2 \textbf{1} + \gamma |u_1 - u_2|^2 \textbf{1}, \label{tau12}
 \\
 \begin{split}
\mathcal{T}_{21} &= (1-\varepsilon (1-\alpha ) )\frac{\mathbb{P}_2}{n_2} + \varepsilon( 1 - \alpha) T_1 \textbf{1}\\& +  ( \frac{1}{3} \varepsilon m_1 (1 - \delta) ( \frac{m_1}{m_2} \varepsilon ( \delta - 1) + \delta + 1) - \varepsilon \gamma)|u_1 - u_2|^2 \textbf{1}. 
\end{split}
\label{tau21}
\end{align}
This choice still contains the temperature of gas $1$, since the trace of the pressure tensor is the temperature.

In \eqref{tau12} compared to \eqref{tau12a} we replace only the temperature $T_1$ of species $1$ by the pressure tensor $\mathbb{P}_1$ while we keep the temperature $T_2$. This asymmetric choice can be motivated by the theory of "persistence of velocity" described by Jeans in \cite{Jeans} and \cite{Jeans2}. 
He argues that in the post-collisional speed of particle $1$ there is a memory of the pre-collisional speed of particle $1$. In the single species BGK equation this yields to the choice of
$$\mathcal{T} = (1- \mu) T \textbf{1} + \mu \mathbb{P}, \quad -\frac{1}{2} \leq \mu \leq 1,$$
the tensor chosen in the well-known ES-BGK model, where $\mu \mathbb{P}$ preserves the memory of the off-equilibrium content of the pre-collisional velocity. This can be rewritten as
$$ \mathcal{T} = T \textbf{1} + \mu \text{traceless}[\mathbb{P}], $$
where $\text{traceless}[\mathbb{P}]$ denotes the traceless part of $\mathbb{P}$. So the off-equilibrium part is contained in $\mu \text{traceless}[\mathbb{P}].$ 
Doing this analogously for two species we arrive at 
$$\mathcal{T}_{12} = T_{12} \textbf{1} + \frac{\alpha}{n_1} \text{traceless}[\mathbb{P}_1].$$
If we plug in the definition of $T_{12}$ given by \eqref{contemp}, we end up with \eqref{tau12}.

With the second choice the model is well-defined, because $\mathcal{T}_{12}$ and $\mathcal{T}_{21}$ are invertible as a combination of strictly positive matrices as soon as all coefficients in front of these matrices are positive, which is the case due to \eqref{gamma} and \eqref{gammapos}.
The first choice needs additional conditions coming from the restrictions on $\mu_{12}$ and $\mu_{21}$ given by \eqref{res12} and \eqref{res21}. The first one leads to $$ \mu_1 \leq \frac{n_2}{n_1} \frac{\nu_{12}}{\nu_{11}} +1,$$
such that $\mu_{12}$ given by \eqref{res12} is positive. The requirement of positivity of $\mu_{21}$ leads to a corresponding restriction on $\mu_2$ using \eqref{res21}.
\subsubsection{Equilibrium and entropy inequality}
\label{sec:3}
The aim of this subsection is to discuss the property of equilibrium and the entropy inequality for the alternative extensions described in subsection 2.2 with the tensors \eqref{tau12a}, \eqref{tau21a} respective \eqref{tau12}, \eqref{tau21}. For the tensors \eqref{tau12a}, \eqref{tau21a} we proved the property of equilibrium and the H-Theorem in subsection 2.1.1 in the particular case for $\mu_{12}= \mu_{21}=0$ for simplicity, but we can also prove it in the general case.
In this section we will prove an entropy inequality for the alternative model \eqref{tau12},\eqref{tau21}. First we will check that the equilibrium distributions are Maxwellians.
In global equilibrium when $f_1$ and $f_2$ are independent of $x$ and $t$, the right- hand side of \eqref{ESBGK} has to be zero. In this case we get 
$$ f_1 = \frac{1}{1 + \frac{1}{\beta_1^2}\frac{n_2}{n_1}}( G_1 + \frac{1}{\beta_1^2} \frac{n_2}{n_1} G_{12}).$$
If we compute the temperatures of this expression, we get
\begin{align*}
 T_1 = \frac{1}{1 + \frac{1}{\beta_1^2}\frac{n_2}{n_1}} ( T_1 + \frac{1}{\beta_1^2} \frac{n_2}{n_1} ( \alpha T_1 + ( 1- \alpha) T_2)), \\
\end{align*}
which is  equivalent to $T_1=T_2$ for $\alpha \neq 1$. So denote $T:= T_1= T_2$. 
If we compute pressure tensors, we get
\begin{align*}
(1 + \frac{1}{\beta_1^2}\frac{n_2}{n_1}) \mathbb{P}_1 &=  \mathcal{T}_1 + \frac{1}{\beta_1^2}\frac{n_2}{n_1} \mathcal{T}_{12}\\ &=  ( 1- \nu_1) T +  \nu_1 \mathbb{P}_1 + \frac{1}{\beta_1^2}\frac{n_2}{n_1} \alpha \mathbb{P}_1 +\frac{1}{\beta_1^2}\frac{n_2}{n_1} (1- \alpha)T \textbf{1}
\end{align*}
which is equivalent to 
\begin{align*}
((1-\nu_1) + \frac{1}{\beta_1^2}\frac{n_2}{n_1} (1- \alpha)) \mathbb{P}_1 = ((1-\nu_1) + \frac{1}{\beta_1^2}\frac{n_2}{n_1} (1- \alpha)) T \textbf{1},
\end{align*}
which is $\mathbb{P}_1 = T \textbf{1}$ for $\nu_1,\alpha \neq 1$. That means that the pressure tensors of $f_1$ and $f_2$ are diagonal and they are Maxwellian distributions with equal mean velocity and temperature.                                                                                                                                                                                                                                                                                                                                                                                                                                                                                                                                                                                                                                                                                                                                                                                                                                                                                                                                                                                                                                                                                                                                                                                                                                                                                                                                                                                                                                                                                                                                                                                                                                                                                                                                                                                                                                                                                                                                                                                                                                                                                                                                                                                                                                                                                                                                                                                                                                                                                                                                                                                                                                                                                                                                                                                                                                                                                                                                                                                                                                                                                                                                                                                                                                                            

Next, we want to prove the H-Theorem of the simpler model \eqref{tau12} and \eqref{tau21}. For this proof, we need the following lemmas.
\begin{lemma}[Brunn-Minkowski inequality]
Let $0 \leq a \leq 1$ and $A,B$ positive symmetric matrices, then 
$$ \det (aA + (1-a) B) \geq (\det A)^a (\det B)^{1-a}.$$
\end{lemma}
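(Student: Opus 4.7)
The plan is to reduce the matrix inequality to a product of scalar inequalities via simultaneous diagonalization, and then apply the weighted arithmetic-geometric mean inequality in each coordinate.

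First I would dispose of the degenerate case: if either $A$ or $B$ is only positive semidefinite and singular, say $\det B = 0$, then the right-hand side vanishes while $aA + (1-a)B$ is still positive semidefinite, so the inequality is immediate. So I may assume $A, B$ are strictly positive definite.

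Next I would invoke the standard fact of simultaneous diagonalization: since $A$ is symmetric positive definite, it admits a symmetric square root $A^{1/2}$, and $A^{-1/2} B A^{-1/2}$ is symmetric positive definite, hence orthogonally diagonalizable. This yields an invertible matrix $C$ (namely $C = A^{-1/2} Q$ for the diagonalizing orthogonal $Q$) such that
\begin{equation*}
C^{T} A C = \mathbf{I}, \qquad C^{T} B C = \Lambda = \mathrm{diag}(\lambda_1, \dots, \lambda_n),
\end{equation*}
with $\lambda_i > 0$. Then $C^T (aA + (1-a)B) C = a\mathbf{I} + (1-a)\Lambda$, which is diagonal with positive entries $a + (1-a)\lambda_i$. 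Taking determinants and dividing by $\det(C)^2 > 0$ (which equals $1/\det A$), the desired inequality becomes
\begin{equation*}
\prod_{i=1}^{n} \bigl(a + (1-a)\lambda_i\bigr) \;\geq\; \prod_{i=1}^{n} \lambda_i^{\,1-a}.
\end{equation*}

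Finally, the weighted AM-GM inequality applied pointwise gives
\begin{equation*}
a \cdot 1 + (1-a)\cdot \lambda_i \;\geq\; 1^{a}\, \lambda_i^{1-a} \;=\; \lambda_i^{1-a},
\end{equation*}
for each $i$, and multiplying these positive inequalities yields the claim. I do not expect any genuine obstacle here; the only subtlety is choosing $C$ correctly to achieve simultaneous diagonalization, after which the matrix statement collapses into $n$ independent one-dimensional AM-GM inequalities. Equality holds throughout iff $\lambda_i = 1$ for all $i$, i.e.\ iff $A = B$.
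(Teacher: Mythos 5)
Your proof is correct. Note, however, that the paper does not actually prove this lemma at all: its ``proof'' is a one-line deferral to the reference \cite{AndriesPerthame2001}, so there is no in-paper argument to compare against. What you have written is the standard self-contained proof (essentially the one found in that reference and in textbook treatments of the Minkowski determinant inequality in its log-concave form): reduce to the strictly positive definite case, simultaneously diagonalize via $C = A^{-1/2}Q$ so that $C^TAC = \mathbf{1}$ and $C^TBC = \Lambda$, divide out $\det(C)^2 = 1/\det A$, and apply the weighted AM--GM inequality $a + (1-a)\lambda_i \geq \lambda_i^{1-a}$ coordinatewise. All of these steps check out, including the bookkeeping that turns $(\det A)^a(\det B)^{1-a}$ into $\det A \prod_i \lambda_i^{1-a}$. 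Two minor remarks: your equality criterion $A = B$ only holds for $0 < a < 1$ (at $a \in \{0,1\}$ equality is automatic), though the lemma as stated does not require the equality case; and in the degenerate case you should note that for $a=1$ the convention $(\det B)^0 = 1$ makes the inequality the trivial $\det A \geq \det A$, so the ``right-hand side vanishes'' argument is only needed for $a<1$. Your version has the advantage of making the paper self-contained where the original is not.
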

\begin{proof}
The proof is given in \cite{AndriesPerthame2001}.
\end{proof}
\begin{lemma}
Assuming \eqref{tau12} and \eqref{tau21} and the positivity of all temperatures and pressure tensors  \eqref{gamma}, we have the following inequality
\begin{align*}
S:= ( \det\mathcal{T}_{12})^{\varepsilon} ( \det \mathcal{T}_{21}) \geq ( \det \frac{\mathbb{P}_1}{n_1})^{\varepsilon} \det \frac{\mathbb{P}_2}{n_2}.
\end{align*}
\label{inequ}
\end{lemma}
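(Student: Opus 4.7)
The plan is to estimate $\det \mathcal{T}_{12}$ and $\det \mathcal{T}_{21}$ separately from below using the Brunn--Minkowski inequality (the preceding lemma), and then close with an AM--GM step relating the scalar temperature $T_k$ to $\det(\mathbb{P}_k/n_k)$. The key observation is that both $\mathcal{T}_{12}$ and $\mathcal{T}_{21}$ have the form ``convex combination of $\mathbb{P}_k/n_k$ and a positive multiple of $\textbf{1}$, plus an extra nonnegative multiple of $\textbf{1}$ coming from $|u_1-u_2|^2$''.

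First I would dispose of the drift parts. In $\mathcal{T}_{12}$ the extra coefficient is just $\gamma \geq 0$ by \eqref{gamma}. In $\mathcal{T}_{21}$ one can expand the cumbersome expression $\tfrac{1}{3}\varepsilon m_1(1-\delta)(\tfrac{m_1}{m_2}\varepsilon(\delta-1)+\delta+1) - \varepsilon \gamma$ and recognize it as $\varepsilon$ times the right-hand side of the upper bound in \eqref{gamma} minus $\varepsilon\gamma$, hence also nonnegative. Since adding a positive semi-definite matrix to a positive definite one can only increase the determinant (Loewner monotonicity of $\det$), this yields
\begin{align*}
\det \mathcal{T}_{12} &\geq \det\bigl(\alpha \tfrac{\mathbb{P}_1}{n_1} + (1-\alpha) T_2 \textbf{1}\bigr),\\
\det \mathcal{T}_{21} &\geq \det\bigl((1-\varepsilon(1-\alpha)) \tfrac{\mathbb{P}_2}{n_2} + \varepsilon(1-\alpha) T_1 \textbf{1}\bigr).
\end{align*}

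Next, both weights $\alpha$ and $1-\varepsilon(1-\alpha)$ lie in $[0,1]$ (the second because $\varepsilon \leq 1$ and $\alpha \in [0,1]$), so Brunn--Minkowski, applied with $A$ a pressure tensor and $B$ a scalar multiple of $\textbf{1}$, gives
\begin{align*}
\det \mathcal{T}_{12} &\geq \bigl(\det \tfrac{\mathbb{P}_1}{n_1}\bigr)^{\alpha}\, T_2^{3(1-\alpha)},\\
\det \mathcal{T}_{21} &\geq \bigl(\det \tfrac{\mathbb{P}_2}{n_2}\bigr)^{1-\varepsilon(1-\alpha)}\, T_1^{3\varepsilon(1-\alpha)}.
\end{align*}
Raising the first estimate to the power $\varepsilon$ and multiplying by the second, and using the identity $\varepsilon-\varepsilon\alpha = \varepsilon(1-\alpha) = 1-(1-\varepsilon(1-\alpha))$, the target inequality reduces to
\[
(T_1 T_2)^{3\varepsilon(1-\alpha)} \geq \bigl(\det \tfrac{\mathbb{P}_1}{n_1}\,\det \tfrac{\mathbb{P}_2}{n_2}\bigr)^{\varepsilon(1-\alpha)},
\]
i.e.\ to $T_k^3 \geq \det(\mathbb{P}_k/n_k)$ for $k=1,2$ (the degenerate cases $\varepsilon=0$ or $\alpha=1$ are trivial). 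This last inequality is AM--GM applied to the three positive eigenvalues of $\mathbb{P}_k/n_k$, using that $T_k = \tfrac{1}{3}\mathrm{tr}(\mathbb{P}_k/n_k)$ is their arithmetic mean.

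I expect the main technical obstacle to be the algebraic verification that the $|u_1-u_2|^2$ coefficient in $\mathcal{T}_{21}$ is nonnegative; once that identification with $\varepsilon$ times (the $\gamma$-bound from \eqref{gamma} minus $\gamma$) is made, every remaining step is a clean application of Brunn--Minkowski or AM--GM.
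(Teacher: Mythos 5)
Your proposal is correct and follows essentially the same route as the paper's proof: drop the nonnegative $|u_1-u_2|^2$ diagonal contributions, apply Brunn--Minkowski to each tensor, and reduce to $T_k^3 \geq \det(\mathbb{P}_k/n_k)$, which the paper obtains via concavity of $\ln$ applied to the eigenvalues --- the same AM--GM fact you invoke. The only cosmetic difference is that you work multiplicatively while the paper passes to logarithms.
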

\begin{proof}
Using the definition of $\mathcal{T}_{12}$ we get
\begin{align*}
\det \mathcal{T}_{12}= \det ( \alpha \frac{\mathbb{P}_1}{n_1} + (1- \alpha) T_2 \textbf{1} + \gamma |u_1-u_2|^2 \textbf{1} ).
\end{align*}
Since $\gamma$ is non-negative, we can estimate the expression by dropping the positive term on the diagonal $\gamma |u_1-u_2|^2 \textbf{1}$
\begin{align*}
\det \mathcal{T}_{12}\geq \det ( \alpha \frac{\mathbb{P}_1}{n_1} + (1- \alpha) T_2 \textbf{1}).
\end{align*}
With the Brunn-Minkowski-inequality we obtain
\begin{align*}
\det \mathcal{T}_{12}\geq ( \det \frac{\mathbb{P}_1}{n_1} )^{\alpha} ( \det T_2 \textbf{1} )^{ 1- \alpha}.
\end{align*}
In a similar way, we can show it for $\mathcal{T}_{21}$, so all in all we get
\begin{align*}
S \geq ( \det \frac{\mathbb{P}_1}{n_1} )^{\alpha \varepsilon} ( \det T_2 \textbf{1} )^{ \varepsilon(1- \alpha)} ( \det \frac{\mathbb{P}_2}{n_2} )^{1- \varepsilon(1-\alpha)} ( \det T_1 \textbf{1} )^{ \varepsilon(1- \alpha)}.
\end{align*}
Consider the logarithm of this equation
\begin{align*}
\ln S \geq \varepsilon \alpha \ln \left( \det \left( \frac{\mathbb{P}_1}{n_1} \right) \right) + \varepsilon (1- \alpha) \ln \left( \det \left( T_2 \textbf{1} \right) \right) \\ + (1- \varepsilon ( 1- \alpha)) \ln \left( \det \left( \frac{\mathbb{P}_2}{n_2} \right) \right) + \varepsilon (1- \alpha ) \ln \left( \det \left(T_1 \textbf{1} \right) \right).
\end{align*}
We use that $\ln \left( \det \left(T_i \textbf{1} \right) \right)= \text{Tr} ( \ln \left(T_i \textbf{1} \right) )$, $T_i= \text{Tr} \frac{\mathbb{P}_i}{3 n_i}$ and denote  the eigenvalues of $\frac{\mathbb{P}_i}{n_i}$ by $\lambda_{i,1}, \lambda_{i,2}$ and $ \lambda_{i,3}$. Since the pressure tensors are symmetric, we can diagonalize them and use that $T_i= \text{Tr} \frac{\mathbb{P}}{3 n_i} = \lambda_{i,1} + \lambda_{i,2} + \lambda_{i,3}$.
\begin{align*}
\ln S \geq \varepsilon \alpha ( \ln \lambda_{1,1} + \ln \lambda_{1,2}+ \ln \lambda_{1,3}) + \varepsilon ( 1- \alpha) 3 \ln \frac{1}{3}(\lambda_{1,1}+\lambda_{1,2}+\lambda_{1,3}) \\+(1- \varepsilon (1- \alpha)) ( \ln \lambda_{2,1} + \ln \lambda_{2,2}+ \ln \lambda_{2,3}) + \varepsilon ( 1- \alpha) 3 \ln \frac{1}{3}(\lambda_{2,1}+\lambda_{2,2}+\lambda_{2,3}).
\end{align*}
Since $ln$ is concave, we can estimate $\ln \frac{1}{3}(\lambda_{1,1}+\lambda_{1,2}+\lambda_{1,3})$ from below by \\$ \frac{1}{3} (\ln \lambda_{1,1} + \ln \lambda_{1,2} + \ln \lambda_{1,3})$ and obtain
\begin{align*}
\ln S \geq \varepsilon \ln \left( \det \left( \frac{\mathbb{P}_1}{n_1} \right) \right) + \varepsilon (1-\alpha) \ln \left( \det \left( \frac{\mathbb{P}_2}{n_2} \right) \right).
\end{align*}
This is equivalent to the required inequality.
\end{proof}
\begin{remark}
From the case of one species ES-BGK model we know that
$$ \int G_k \ln G_k dv \leq \int G_{k, \mu_k=1} \ln G_{k, \mu_k=1} dv \leq \int f_k \ln f_k dv,$$ for $k=1,2$, see \cite{AndriesPerthame2001}, where $G_{k, \mu_k=1}$ denotes the modified Maxwellian where $\mu_k=1$ in the  tensor \eqref{ten}.
\label{one}
\end{remark}
\begin{theorem}[H-theorem for mixture]
Assume $f_1, f_2 >0$.
Assume the relationship between the collision frequencies \eqref{coll}, the conditions for the interspecies Maxwellians \eqref{convexvel}, \eqref{veloc}, \eqref{tau12} and \eqref{tau21} and the positivity of the temperatures \eqref{gamma}, then
{\small
$$
\int (\ln f_1) ~ Q_{11}(f_1,f_1) + (\ln f_1) ~ Q_{12}(f_1,f_2) dv + \int (\ln f_2) ~ Q_{22}(f_2, f_2)+ (\ln f_2) ~ Q_{21}(f_2, f_1) dv\leq 0,
$$}
with equality if and only if $f_1$ and $f_2$ are Maxwell distributions with equal mean velocity and temperature. 
\label{H-theorem}
\end{theorem}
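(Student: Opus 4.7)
The plan is to split the sum of entropy production integrals into the self-interaction contributions and the inter-species contributions, and bound each block separately. The self-interaction pieces $\nu_{kk}n_k\int (\ln f_k)(G_k - f_k)\,dv$, $k=1,2$, are non-positive by the H-theorem for the single-species ES-BGK operator proved in \cite{AndriesPerthame2001}, already invoked in Theorem \ref{H-theoremsimple}, with equality exactly when $f_k$ is Maxwellian.

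For the cross terms I would use the Gibbs/Young inequality (derived from $\ln(f/g)\le f/g-1$) together with the density matching $\int G_{kj}\,dv = n_k = \int f_k\,dv$ ensured by \eqref{density} to obtain
\[
\nu_{kj}n_j \int (\ln f_k)(G_{kj}-f_k)\,dv \;\le\; \nu_{kj}n_j\Big(\int G_{kj}\ln G_{kj}\,dv - \int f_k\ln f_k\,dv\Big).
\]
A direct Gaussian computation yields $\int G_{kj}\ln G_{kj}\,dv = n_k\ln n_k - \tfrac{n_k}{2}\ln\det(2\pi\mathcal{T}_{kj}/m_k) - \tfrac{3n_k}{2}$, while Remark \ref{one} specialized at $\mu_k=1$ provides the lower bound $\int f_k\ln f_k\,dv \ge n_k\ln n_k - \tfrac{n_k}{2}\ln\det(2\pi \mathbb{P}_k/(n_k m_k)) - \tfrac{3n_k}{2}$. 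Substituting and exploiting $\nu_{12}=\varepsilon\nu_{21}$, the cross-term sum is controlled by
\[
\tfrac{n_1 n_2 \nu_{21}}{2}\ln \frac{(\det \mathbb{P}_1/n_1)^{\varepsilon}\,\det \mathbb{P}_2/n_2}{(\det\mathcal{T}_{12})^{\varepsilon}\,\det\mathcal{T}_{21}},
\]
which is $\le 0$ by Lemma \ref{inequ}. Together with the self-term bound this gives the desired inequality.

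For the equality case I would trace simultaneous equality in the three ingredients. Equality in each Gibbs estimate forces $f_1=G_{12}$ and $f_2=G_{21}$; equality in Remark \ref{one} forces $f_k$ to coincide with the Gaussian $G_{k,\mu_k=1}$; and equality in the Brunn--Minkowski step behind Lemma \ref{inequ}, together with the vanishing of the positive term $\gamma|u_1-u_2|^2\textbf{1}$ dropped in its proof, forces $\mathbb{P}_k/n_k$ to be a scalar multiple of the identity. Combined with the equality case of the self-term H-theorem this gives $\mathbb{P}_k = n_kT_k\textbf{1}$, so both $f_k$ are Maxwellians; then matching their parameters $(u_k,T_k)$ with those of $G_{12},G_{21}$ (using $\delta\neq 1$ and $\alpha\neq 1$, as in the global equilibrium analysis preceding the theorem) forces $u_1=u_2$ and $T_1=T_2$.

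The main obstacle is the equality analysis: three independent inequalities (Gibbs, Remark \ref{one}, and Lemma \ref{inequ}) each carry their own equality conditions, and showing that they can be simultaneously saturated \emph{only} by common-velocity, common-temperature Maxwellians requires careful bookkeeping, in particular to handle the $\gamma|u_1-u_2|^2$ contribution embedded in $\mathcal{T}_{12},\mathcal{T}_{21}$ and to exclude boundary degeneracies in the free parameters $\mu_k$.
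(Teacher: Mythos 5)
Your proposal is correct and follows essentially the same route as the paper's proof: the self-interaction terms are handled by the single-species ES-BGK H-theorem, and the cross terms by the convexity (Gibbs) inequality combined with the explicit Gaussian entropy formula, Remark \ref{one} at $\mu_k=1$, and the determinant inequality of Lemma \ref{inequ}, with the equality case traced back to $f_1=G_{12}$, $f_2=G_{21}$. The only difference is the trivial reordering of how the three bounds are assembled (you bound $\int f_k\ln f_k\,dv$ from below directly, whereas the paper first compares $G_{kj}$ with $G_{k,\mu_k=1}$), and your equality discussion is, if anything, slightly more explicit than the paper's.
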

\begin{proof}
The fact that $\int \ln f_k Q_{kk}(f_k,f_k) dv \leq 0$, $k=1,2$ is shown in proofs of the H-theorem of the single ES-BGK-model, for example in \cite{AndriesPerthame2001}.
In both cases we have equality if and only if $f_1=M_1$ and $f_2 = M_2$. \\
Let us define 
$$
S(f_1,f_2) :=\nu_{12} n_2 \int \ln f_1  ( G_{12}- f_1 )dv + \nu_{21} n_1 \int \ln f_2  ( G_{21}- f_2)dv .
$$
The task is to prove that $S(f_1, f_2)\leq 0$.
Since the function $H(x)= x \ln x -x$ is strictly convex for $x>0$, we have $H'(f) (g-f) \leq H(g) - H(f)$ with equality if and only if $g=f$. So \begin{align}
(g-f) \ln f  \leq g \ln g - f \ln f +f -g .
\label{convex}
\end{align}
Consider now $S(f_1,f_2)$ and apply the inequality \eqref{convex} to each of the two terms in $S$.
$$
S (f_1,f_2)\leq
\nu_{12} n_2 \left[ \int G_{12} \ln G_{12}  dv - \int f_1 \ln f_1 dv - \int G_{12} dv + \int f_1 dv\right]$$ $$ + \nu_{21} n_1 \left[\int G_{21} \ln G_{21} dv - \int f_2 \ln f_2 dv - \int G_{21} dv + \int f_2 dv \right] ,
$$
with equality if and only if $f_1=G_{12}$ and $f_2=G_{21}$. If we compute the velocities of $f_1=G_{12}$ and $f_2=G_{21}$, we can deduce $u_1=u_{12}$ and $u_2=u_{21}$ which lead to $u_1=u_2$ using the definitions of $u_{12}, u_{21}$ given by \eqref{convexvel} and \eqref{veloc}. Analogously, computing the temperatures, we get $T_{12}=T_{21}=T_1=T_2=:T$. Finally, computing the pressure tensors, we obtain $\frac{\mathbb{P}_1}{n_1}= \frac{\mathbb{P}_2}{n_2}= T \mathbf{1}$, which means that we have equality if and only if $f_1$ and $f_2$ are Maxwellians with equal temperatures and velocities.  \\
Since $G_{12}$ and $f_1$ have the same density and $G_{21}$ and $f_2$ have the same density too,  the right-hand side reduces to
$$
\nu_{12} n_2 ( \int G_{12} \ln G_{12} dv - \int f_1 \ln f_1 dv )+ \nu_{21} n_1 (\int G_{21} \ln G_{21} dv - \int f_2 \ln f_2 dv ).
$$
Since $\int G \ln G dv = n \ln(\frac{n}{\sqrt{\det(\frac{2 \pi \mathcal{T}}{m})}})- \frac{3}{2} n$ for $G=\frac{n}{\sqrt{\det(\frac{ 2 \pi \mathcal{T}}{m})}^3}e^{-(v-u)\cdot (\frac{\mathcal{T}}{m})^{-1}\cdot (v-u)},$  we will have that
$$
\nu_{12} n_2  \int G_{12} \ln G_{12}  dv + \nu_{21} n_1 \int G_{21} \ln G_{21}  dv $$$$\leq \nu_{21} n_1 \int G_{2, \mu_2=1} \ln M_{2, \mu_2=1}  dv  + \nu_{12} n_2  \int G_{1, \mu_1=1} \ln G_{1, \mu_1=1}  dv, 
$$
provided that
\begin{align*}
\nu_{12} n_2 n_1 \ln \frac{n_1}{\sqrt{\det (2 \pi \frac{\mathcal{T}_{12}}{m_1})}} +\nu_{21} n_2 n_1 \ln \frac{n_2}{\sqrt{\det( 2 \pi \frac{\mathcal{T}_{21})}{m_2}}} \\ \leq \nu_{12} n_2 n_1 \ln \frac{n_1}{\sqrt{\det(2 \pi \frac{\mathbb{P}_1}{m_1})}} +\nu_{21} n_2 n_1 \ln \frac{n_2}{\sqrt{\det(2 \pi \frac{\mathbb{P}_2}{m_2})}},
\end{align*}
which is equivalent to the condition 
$$
( \det\mathcal{T}_{12})^{\varepsilon} ( \det \mathcal{T}_{21}) \geq ( \det \frac{\mathbb{P}_1}{n_1})^{\varepsilon} \det \frac{\mathbb{P}_2}{n_2},
$$
proven in Lemma \ref{inequ}.
\\
With this inequality we get
\begin{align*}
S(f_1,f_2) \leq
&\nu_{12} n_2 [ \int G_{1, \mu_1=1} \ln G_{1, \mu_1=1}  dv - \int f_1 \ln f_1 dv ]\\&+ \nu_{21} n_1 [ G_{2, \mu_2=1} \ln G_{2, \mu_2=1} dv - \int f_2 \ln f_2 dv ] \leq 0 .
\end{align*}
The last inequality follows from remark \ref{one}. Here we also have equality if and only if $f_1=M_1$ and $f_2=M_2$, but since we already noticed that equality also implies $f_1=G_{12}$ and $f_2=G_{21}$.
\end{proof}
Define the total entropy $H(f_1,f_2) = \int (f_1 \ln f_1 + f_2 \ln f_2) dv$. We can compute 
$$ \partial_t H(f_1,f_2) + \nabla_x \cdot \int ( f_1 \ln f_1 + f_2 \ln f_2 ) v dv  = S(f_1,f_2),$$ by multiplying the BGK equation for the species $1$ by $\ln f_1$, the BGK equation for the species $2$ by $\ln f_2$ and integrating the sum with respect to $v$.
 \begin{corollary}[Entropy inequality for mixtures]
Assume $f_1, f_2 >0$. Assume  a fast enough decay of $f$ to zero for $v\rightarrow \infty$.
Assume relationship \eqref{coll}, the conditions \eqref{convexvel}, \eqref{veloc}, \eqref{tau12} and \eqref{tau21} and the positivity of the temperatures \eqref{gamma} , then we have the following entropy inequality
$$
\partial_t \left(\int   f_1 \ln f_1  dv + \int f_2 \ln f_2 dv \right) + \nabla_x \cdot \left(\int  v f_1 \ln f_1  dv + \int v f_2 \ln f_2 dv \right) \leq 0,
$$
with equality if and only if $f_1$ and $f_2$ are Maxwell distributions with equal bulk velocity and temperature. 
\end{corollary}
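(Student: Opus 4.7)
The plan is to derive a local entropy balance by multiplying the ES-BGK equation \eqref{ESBGK} for species $k$ by $\ln f_k$, integrating over $v$, and summing over $k=1,2$. For the left-hand side I would use the identities $\ln f_k\,\partial_t f_k = \partial_t(f_k\ln f_k) - \partial_t f_k$ and $\ln f_k\,\nabla_x\!\cdot\!(v f_k) = \nabla_x\!\cdot\!(v f_k\ln f_k) - \nabla_x\!\cdot\!(v f_k)$, and commute $\partial_t$ and $\nabla_x$ with the $v$-integral (legitimate under the fast-decay hypothesis on $f_k$). The spurious terms $\partial_t n_k + \nabla_x\!\cdot\!(n_k u_k)$ thus generated vanish, because integrating \eqref{ESBGK} in $v$ yields the species continuity equation: $G_k$ and $G_{kj}$ share the density $n_k$ with $f_k$, so $\int(G_k - f_k)\,dv = \int(G_{kj} - f_k)\,dv = 0$.

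What survives is precisely the identity announced in the paragraph preceding the corollary, namely
$$\partial_t H(f_1,f_2) + \nabla_x\!\cdot\!\int v\bigl(f_1\ln f_1 + f_2\ln f_2\bigr)\,dv = \mathcal{S}(f_1,f_2),$$
where $\mathcal{S}(f_1,f_2)$ denotes the total entropy-production functional appearing as the left-hand side of the inequality in Theorem \ref{H-theorem}, that is the sum of the self-collision contributions $\int\ln f_k\,Q_{kk}(f_k,f_k)\,dv$ and the interspecies contributions $\int\ln f_k\,Q_{kj}(f_k,f_j)\,dv$ for $k\neq j$.

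To conclude I would invoke Theorem \ref{H-theorem} directly: it states $\mathcal{S}(f_1,f_2)\leq 0$, so the right-hand side of the local balance is non-positive, which is exactly the claimed inequality. The equality case also transfers from that theorem: equality in the self-collision pieces forces $f_k = M_k$ (by the one-species ES-BGK H-theorem of \cite{AndriesPerthame2001}), while equality in the interspecies piece (whose proof relied on Lemma \ref{inequ} and the Brunn--Minkowski inequality) forces $f_1 = G_{12}$, $f_2 = G_{21}$; combining these, as carried out in the proof of Theorem \ref{H-theorem}, yields $u_1 = u_2$, $T_1 = T_2$ and diagonal pressure tensors, i.e.\ $f_1$ and $f_2$ are Maxwellians with a common bulk velocity and temperature. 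There is no genuine obstacle here, since the hard analytic work is already contained in Theorem \ref{H-theorem}; the only technical point is the commutation of $\partial_t$, $\nabla_x$ with $\int\cdot\,dv$, which is precisely what the fast-decay hypothesis provides.
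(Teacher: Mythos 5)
Your proposal is correct and follows essentially the same route as the paper: derive the local entropy balance by multiplying by $\ln f_k$ and integrating, observe that the extra terms vanish by the continuity equation, and invoke the H-theorem (Theorem \ref{H-theorem}) for the sign and the equality case. If anything you are slightly more careful than the paper, which writes the right-hand side of the balance as $S(f_1,f_2)$ even though that symbol was defined to contain only the interspecies terms, whereas the full entropy production including the $Q_{kk}$ contributions is what actually appears.
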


In summary the ES-BGK models \eqref{ESBGKsimple}, \eqref{ESBGK} have five free parameters. We expect this will aid in determining macroscopic physical constants, analogously to how it is done in \cite{Groppi}.

\end{document}